    \newcommand\email[1]{\_email #1\q_nil}
    \def\_email#1@#2\q_nil{%
      \href{mailto:#1@#2}{{\emailfont #1\emailampersat #2}}
    }
    \newcommand\emailfont{\sffamily}
    \newcommand\emailampersat{{\color{red}\small@}}
\theoremstyle{plain}
\newtheorem{thm}{Theorem}[section]
\newtheorem{lemma}[thm]{Lemma}
\newcommand{\superimpose}[2]{%
  {\ooalign{$#1\@firstoftwo#2$\cr\hfil$#1\@secondoftwo#2$\hfil\cr}}}
\newcommand{\be}{\begin{equation}} 
\newcommand{\ee}{\end{equation}}
\newcommand{\sa}{\Sigma}
\newcommand{\M}{\mathbf{Meas}}
\newcommand{\mcS}{P_{Y|X}}
\newcommand{\B}{\mathcal{B}}
\newcommand{\G}{\mathcal{G}}   
\newcommand{\T}{\mathcal{G}}   
\def\@normalsize{\@setsize\normalsize{14.5pt}\xiipt\@xiipt
\abovedisplayskip 12\p@ plus3\p@ minus7\p@
\belowdisplayskip \abovedisplayskip
\abovedisplayshortskip  \z@ plus3\p@
\belowdisplayshortskip  6.5\p@ plus3.5\p@ minus3\p@
\let\@listi\@listI}
\title{Bayesian Inference \\ using the \\ Symmetric Monoidal Closed Category Structure}
\date{}
\author{Kirk Sturtz}
\begin{document}
 \maketitle
 
\thispagestyle{empty}

\begin{abstract}
Using the symmetric monoidal closed category structure of the category of measurable spaces, in conjunction with the Giry monad which we show is a strong monad, we analyze Bayesian inference maps and their construction in relation to the tensor product probability.  This perspective permits the inference maps to be seen as a pullback construction.
\end{abstract}

\section{Introduction} 
A theory for constructing Bayesian inference maps is developed by exploiting the symmetric monoidal closed  category (SMCC) structure of the category of measurable spaces, $\M$.  Using this property the inference maps can be constructed as pullbacks.

While the construction of these maps requires the SMCC structure of $\M$, the  Bayesian inference problem itself is most naturally characterized within the Kleisi category of the Giry monad, $K(\G)$, where $\G$ denote the Giry monad on the category of measurable spaces, $\M$.\cite{Giry}  In this introduction we provide a general overview of this perspective which permits one to quickly understand precisely what the \emph{Bayesian inference problem} entails.

A Bayesian model is a pair $(P_X, P_{Y|X})$ consisting of a  probability measure \mbox{$P_X \in \G(X)$},  and a (regular) conditional probability measure $P_{Y|X}: X \rightarrow \G(Y)$.\footnote{A regular conditional probability is also commonly referred to as a kernel.} Thus at  each point \mbox{$x \in X$}, $P_{Y|X}(\cdot | x)$ is a probability measure on $Y$, and this  conditional probability $\mcS$ in turn  determines the conditional probability $P_{X \times Y |X}$ defined as the probability measure on $X \times Y$ at every $x \in X$ as the composite
  \begin{equation}   \nonumber
 \begin{tikzpicture}[baseline=(current bounding box.center)]
 	\node	(1)	at	(-6,-4)		         {$1$};
	\node	(Y)	at	(1,-4)	               {$Y$};
	\node	(X)	at	(-3,-4)	               {$X$};
	\node        (XY)  at      (-3,-2)               {$X \times Y$};
	
	\draw[->,left] (1) to node [xshift=-7pt,yshift=6pt] {$P_{X \times Y|X}( \cdot | x)$} (XY);
	
	\draw[->,above] (1) to node {$\ulcorner x \urcorner$} (X);
	\draw[->,above] (Y) to node [xshift=5pt]{$\delta_{\Gamma_x}$} (XY);
	\draw[->,right,dashed] (X) to node  [xshift=-0pt,yshift=0pt]{$P_{X \times Y | X}$} (XY);
	\draw[->,above] ([yshift=0pt] X.east) to node {$P_{Y|X}$} ([yshift=0pt] Y.west);

 \end{tikzpicture}
 \end{equation}
giving the push forward probability measure  $P_{X \times Y |X}(\cdot | x) = P_{Y|X}(\Gamma_x^{-1}(\cdot) | x)$, where 
  \begin{equation}   \nonumber
 \begin{tikzpicture}[baseline=(current bounding box.center)]

	\node	(Y)	at	(0,0)	               {$Y$};
	\node        (XY)  at      (3,0)               {$X \times Y$};
		\node	(y)	at	(0,-.8)	               {$y$};
	\node        (xy)  at      (3,-.8)               {$(x,y)$};
	
	\draw[->,above] (Y) to node {$\Gamma_x$} (XY);
	\draw[|->] (y) to node {} (xy);

 \end{tikzpicture}
 \end{equation}
is the constant graph map. On the other hand, given $P_{X \times Y|X}$ we have
 \be \nonumber
 \begin{array}{lcl}
 P_{X|Y}(A | y) &=& (\delta_{\pi_X} \star P_{X \times Y|Y})(A | y) \\
 &=& P_{X \times Y|Y}(\pi_X^{-1}(A) | y)
 \end{array}
 \ee
 where $\pi_X: X \times Y \rightarrow X$ is the coordinate projection map.
These two processes are inverse to each other, and hence knowledge of either $P_{X|Y}$ or $P_{X\times Y|Y}$ solves the inference problem.

Given a Bayesian model $(P_X, P_{Y|X})$ we can construct the  $K(\G)$-diagram
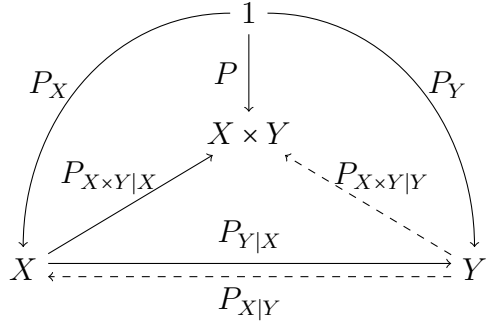
\begin{figure}[H]
  \begin{equation}   \nonumber
 \begin{tikzpicture}[baseline=(current bounding box.center)]
 	\node	(1)	at	(0,-.6)		         {$1$};
	\node	(Y)	at	(3,-4)	               {$Y$};
	\node	(X)	at	(-3,-4)	               {$X$};
	\node        (XY)  at      (0,-2.2)               {$X \times Y$};
	
	\draw[->,out=0,in=90,right] (1) to node  {$P_Y$} (Y);
	\draw[->,out=180,in=90,left] (1) to node  {$P_X$} (X);
	\draw[->,left] (1) to node {$P$} (XY);
	\draw[->,above,dashed] (Y) to node [xshift=5pt]{$P_{X \times Y | Y}$} (XY);
	\draw[->,above] (X) to node  [xshift=-8pt,yshift=0pt]{$P_{X \times Y | X}$} (XY);
	\draw[->,above] ([yshift=2pt] X.east) to node {$P_{Y|X}$} ([yshift=2pt] Y.west);
	\draw[->, below,dashed] ([yshift=-3pt] Y.west) to node {$P_{X|Y}$} ([yshift=-3pt] X.east);

 \end{tikzpicture}
 \end{equation}
 \caption{The Bayesian inference problem.}
\label{BayesDiagram}
 \end{figure}
 \noindent
where by definition $P = P_{X \times Y|X} \star P_X$, and  $P_Y = P\pi_Y^{-1}$ denotes the marginal probability measure on $Y$ given $P$.  
An inference map for the Bayesian model is a conditional probability $P_{X \times Y|Y}$ such that one can use $P_Y$ and $P_{X\times Y|Y}$, in lieu of $P_X$ and $P_{X \times Y|X}$, to determine the \emph{same} joint probability measure $P$ on $X \times Y$,
\be \nonumber
P_{X \times Y|X} \star P_X = P = P_{X \times Y|Y} \star P_Y  
\ee
which is Bayes equation.
Using the constant graph maps Bayes equation  can be written  
\be   \nonumber
 \int_X P_{Y|X}(\Gamma_x^{-1}(\zeta) | x) \, dP_X =  \int_Y P_{X|Y}(\Gamma_y^{-1}(\zeta) | y) \, dP_Y \quad \forall \zeta \in \sa_{X \times Y} 
\ee
 
Provided we restrict $\M$ to the category of Standard Spaces or Polish Spaces, the existence of inference maps is well known.\cite{Faden,Winter}  It is possible to extend this class by using $\mathcal{D}$-kernels rather than kernels which are \emph{regular} conditional probabilities which we have been, and will continue, to call conditional probabilities.  Using $\mathcal{D}$-kernels requires an awkward category and hence we assume Standard Spaces.  This issue  is discussed further in the appendix.  Our choice of Standard Spaces is philosophical based upon the idea probability theory should be independent of any topological properties. A good source covering the essential aspects of Standard Spaces is \cite[Chapter 2]{Gray}. Faden\cite[Proposition 5]{Faden} shows that almost pre-standard spaces suffice.

The objective of this paper is to serve as a stepping stone to develop better computational methods for inference.  Towards that end, we believe it is necessary to exploit the SMCC structure of $\M$,  in conjunction with the strong monad structure of $\G$,  to develop better computational schemes for inference.   The basic structural maps, which are natural transformations,  discussed herein provide the basis for such developments.    
 
 \section{Some Background and Notation} 
 
 A measurable function is called a map or a random variable. The $\sigma$-algebra associated with the product space $X \times Y$ is the product $\sigma$-algebra.  
If a probability on any space assumes only one of the two values $\{0,1\}$ we call the  probability \emph{deterministic}.
Every map $f:X \rightarrow Y$ induces a deterministic probability, defined for all $B \in \sa_Y$ and $x \in X$ by
   \begin{equation}   \nonumber
 \begin{tikzpicture}[baseline=(current bounding box.center)]
 	\node	(X)	at	(0,0)		         {$X$};
	\node	(Xf)	at	(3,0)	               {$Y$};
	\node       (c)    at      (8,0)      {$\delta_{f}(B | x) = \left\{ \begin{array}{ll} 1 & \textrm{ iff }f(x) \in B \\ 0 & \textrm{ otherwise } \end{array} \right..$};
	\draw[->,below] (X) to node {$\delta_{f}$} (Xf);

 \end{tikzpicture}
 \end{equation}
Using the graph  of $f$, $\Gamma_f: X \rightarrow X \times Y$ which maps $x \mapsto (x,f(x))$ gives  the deterministic measure
   \begin{equation}   \nonumber
 \begin{tikzpicture}[baseline=(current bounding box.center)]
 	\node	(X)	at	(0,0)		         {$X$};
	\node	(Xf)	at	(3,0)	               {$X \times Y$};
	\node       (c)    at      (8,0)      {$\delta_{\Gamma_{f}}(\zeta | x) = \left\{ \begin{array}{ll} 1 & \textrm{ iff }(x,f(x))  \in \zeta  \\ 0 & \textrm{ otherwise } \end{array} \right.$};
	\draw[->,below] (X) to node {$\delta_{\Gamma_f}$} (Xf);

 \end{tikzpicture}
 \end{equation}
 which plays an essential role in the theory.
 
 If a Bayesian model $(P_X, P_{Y|X})$ has a deterministic probability $P_{Y|X}$ then we say we have a \emph{deterministic} Bayesian model, and write it as 
 $(P_X, \delta_f)$ where $f:X \rightarrow Y$ is the map giving rise to the deterministic probability.\footnote{In a countably generated measurable space very deterministic measure arises from a measurable function  \cite{Culbertson}.  Consequently, under the assumption of Standard Spaces, every deterministic measure arises from a map $f$ and vice versa.}  If $P_{Y|X}$ is not deterministic then we say the model $(P_X, P_{Y|X})$ is a nondeterministic Bayesian model.

The tensor product monad  $(\M,\otimes,1)$ structure on $\M$, defined on the objects by
 \begin{equation}   \nonumber
 \begin{tikzpicture}[baseline=(current bounding box.center)]
 	\node	(X)	at	(0,0)		         {$\M \times \M$};
	\node	(Y)	at	(4,0)	               {$\M$};
	 	\node	(x)	at	(0,-.8)		         {$X \times Y$};
	\node	(y)	at	(4,-.8)	               {$X \otimes Y$};

	\draw[->,above] (X) to node {$\otimes$} (Y);
	\draw[|->,above] (x) to node {} (y);

 \end{tikzpicture}
 \end{equation}  
 where $X \otimes Y$ is the set $X \times Y$ with the $\sigma$-algebra generated by the family of constant graph maps, 
 $\Gamma_x$ and $\Gamma_y$,
 \begin{equation}   \nonumber
 \begin{tikzpicture}[baseline=(current bounding box.center)]
 	\node	(XY)	at	(0,0)		         {$X \otimes Y$};
	\node	(X)	at	(-2,0)	               {$X$};
	\node        (Y)  at      (2,0)                  {$Y$};
	 \node	(x)	at	(-2,-.8)		         {$x$};
	\node	(y)	at	(2,-.8)	               {$y$};
	\node        (xy)  at     (0,-.8)            {$(x,y)$};

	\draw[->,above] (X) to node {$\Gamma_y$} (XY);
	\draw[->,above] (Y) to node {$\Gamma_x$} (XY);
	
		\draw[|->] (x) to node {} (xy);
	\draw[|->,above] (y) to node {} (xy);

 \end{tikzpicture}
 \end{equation}  
makes $\M$ a symmetric monoidal closed category.\footnote{Further details concerning the SMCC structure of $\M$ can be found in \cite{Sturtz}. The necessary proofs follow readily from the definition of the final $\sigma$-algebra on the set $X \times Y$ via the constant graph maps. }  Thus  the evaluation maps

\be \nonumber 
 \begin{tikzpicture}[baseline=(current bounding box.center)]
        \node          (X)    at      (-3,0)             {$X$};
 	\node	(YX)	at	(3,0)              {$Y^X$};
	\node	(XY)	at	(0,0)	               {$ X \otimes Y^X$};
	\node	(Y)	at	(0,-1.8)               {$Y$};

         \draw[->,above] (X) to node {$\Gamma_{\ulcorner f \urcorner}$} (XY);
         \draw[->,left] (X) to node [xshift=-7pt] {$f$} (Y);
	\draw[->, above] (YX) to node  {$\Gamma_{x}$} (XY);
	\draw[->,right] (YX) to node [xshift=5pt,yshift=0pt] {$ev_x$} (Y);
	\draw[->,right] (XY) to node {$ev_{X,Y}$} (Y);

 \end{tikzpicture}
 \ee
\noindent 
where the function spaces $Y^X$ are defined such that the point evaluation maps are all measurable.  Recall, the product $\sigma$-algebra is a sub $\sigma$-algebra of the tensor $\sigma$-algebra, and, typically we employ the tensor product $\sigma$-algebra only when we require the use of the evaluation maps. Without the finer $\sigma$-algebra the tensor product $\sigma$-algebra provides, the evaluation maps are not  measurable.

\vspace{.1in}

\section{Some structural maps in $(\M,\otimes,1)$}
The monoidal closed structure of $\M$, in conjunction with the strong monad structure of $\G$,  provides several   natural transformation which are indispensable tools.  These basic  ``structural maps'' are due to A. Kock \cite{Kock1,Kock2}.   As we need to show $\G$ is a strong monad it is necessary to prove 

\vspace{.1in}
\begin{thm}
There is a natural transformation between the two functors
\be \nonumber
\cdot \otimes \G(\cdot) \, \G(\cdot \times \cdot): \M \times \M \rightarrow \M
\ee
defined at component $(X,Y)$ by
\begin{equation}   \nonumber
 \begin{tikzpicture}[baseline=(current bounding box.center)]
         \node       (XY)    at      (0,0)          {$X \otimes \G(Y)$};
 	\node	(XY2)	at	(5,0)     {$\G(X \times Y)$};
	\node       (y)      at      (0,-.8)          {$(x,Q)$};
 	\node	(xy)	at	(5,-.8)       {$Q\Gamma_x^{-1}$};

	\draw[->,above] (XY) to node {$\tau''_{X,Y}$} (XY2);
	\draw[|->] (y) to node {} (xy);

 \end{tikzpicture}.
 \end{equation}
 \end{thm}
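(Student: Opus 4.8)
The plan is to verify the two defining properties of a natural transformation: that each component $\tau''_{X,Y}$ is a genuine morphism of $\M$ (that is, measurable), and that the naturality squares commute. Well\mbox{-}definedness of the assignment into $\G(X \times Y)$ is immediate, since for each $x$ the constant graph map $\Gamma_x : Y \to X \times Y$, $y \mapsto (x,y)$, is measurable, so $Q\Gamma_x^{-1}$ is the pushforward of a probability measure along a measurable map, hence again a probability measure on $X \times Y$, i.e.\ a point of $\G(X \times Y)$.

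First I would establish measurability of $\tau''_{X,Y}$. Since the $\sigma$-algebra on $\G(X \times Y)$ is the coarsest one making all evaluation maps $ev_\zeta : P \mapsto P(\zeta)$ measurable for $\zeta \in \sa_{X \times Y}$, it suffices to show that for every such $\zeta$ the composite
\[
(x,Q) \;\longmapsto\; (Q\Gamma_x^{-1})(\zeta) \;=\; Q(\zeta_x), \qquad \zeta_x := \Gamma_x^{-1}(\zeta) = \{\, y \in Y \mid (x,y) \in \zeta \,\},
\]
is a measurable function on $X \otimes \G(Y)$. Because the product $\sigma$-algebra on $X \times \G(Y)$ is a sub\mbox{-}$\sigma$-algebra of the tensor $\sigma$-algebra of $X \otimes \G(Y)$ (as recalled above), it is enough to prove this measurability with respect to the coarser product $\sigma$-algebra, which I would do with a $\pi$--$\lambda$ (Dynkin) argument.

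The hard part --- and the only real content --- is this joint measurability, which I would organize as follows. On a measurable rectangle $\zeta = A \times B$ with $A \in \sa_X$ and $B \in \sa_Y$ one has $\zeta_x = B$ for $x \in A$ and $\zeta_x = \emptyset$ otherwise, so $Q(\zeta_x) = \mathbf{1}_A(x)\, ev_B(Q)$ is a product of a function measurable in $x$ and a function measurable in $Q$, hence product\mbox{-}measurable; the rectangles form a $\pi$-system generating $\sa_{X \times Y}$. Let $\mathcal{D}$ be the family of $\zeta$ for which $(x,Q) \mapsto Q(\zeta_x)$ is measurable. Then $\mathcal{D}$ is a $\lambda$-system: it contains $X \times Y$ (where the section map is the constant $1$); it is closed under complementation, since $(\zeta^c)_x = (\zeta_x)^c$ gives $Q((\zeta^c)_x) = 1 - Q(\zeta_x)$; and it is closed under countable disjoint unions, since sections commute with unions and $Q$ is countably additive, so $Q\big((\bigcup_n \zeta^{(n)})_x\big) = \sum_n Q(\zeta^{(n)}_x)$ is a countable sum of measurable functions. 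By Dynkin's theorem $\mathcal{D} = \sa_{X \times Y}$, which yields measurability of $\tau''_{X,Y}$.

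Finally I would check naturality against an arbitrary morphism $(f,g):(X,Y)\to(X',Y')$ of $\M\times\M$, where the two functors act on morphisms by $f \otimes \G(g)$ and $\G(f\times g)$ respectively, both operating as pushforward on measures. Tracing $(x,Q)$ around the square and using functoriality of pushforward, $(Qh^{-1})k^{-1} = Q(k\circ h)^{-1}$, the down\mbox{-}then\mbox{-}across route gives $Q(\Gamma_{f(x)}\circ g)^{-1}$ while the across\mbox{-}then\mbox{-}down route gives $Q((f\times g)\circ \Gamma_x)^{-1}$; equality then follows from the elementary identity of measurable maps $(f\times g)\circ \Gamma_x = \Gamma_{f(x)}\circ g : Y \to X'\times Y'$, since each sends $y \mapsto (f(x),g(y))$. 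Thus the square commutes on the nose and $\tau''$ is natural. I expect the measurability step to be the main obstacle, the naturality being a purely formal computation.
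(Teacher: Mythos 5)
Your proposal is correct, but it reaches the measurability of $\tau''_{X,Y}$ by a genuinely different route than the paper. The paper only checks \emph{separate} measurability: for fixed $Q$ it cites Cohn for measurability of $x \mapsto Q\Gamma_x^{-1}(\zeta)$, and for fixed $x$ it observes the composite is the evaluation map $ev_{\Gamma_x^{-1}(\zeta)}$ on $\G(Y)$; it then invokes the defining (final) property of the tensor $\sigma$-algebra --- $\sa_{X \otimes \G(Y)}$ is the \emph{largest} $\sigma$-algebra making all constant graph maps measurable --- so that sectionwise measurability of $({\tau''_{X,Y}})^{-1}(ev_\zeta^{-1}(U))$ already puts it in $\sa_{X \otimes \G(Y)}$. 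That is precisely the shortcut the tensor structure is designed to provide. You instead prove the strictly stronger statement that $(x,Q) \mapsto Q(\Gamma_x^{-1}(\zeta))$ is \emph{jointly} measurable for the product $\sigma$-algebra on $X \times \G(Y)$, via the rectangle computation $Q(\zeta_x) = \mathbf{1}_A(x)\,ev_B(Q)$ and a $\pi$--$\lambda$ argument, and then pass to the finer tensor $\sigma$-algebra by the inclusion $\sa_{X \times \G(Y)} \subseteq \sa_{X \otimes \G(Y)}$. Your route is self-contained (it in effect re-proves the cited Cohn result with the parameter $Q$ carried along) and shows the tensor structure is not actually needed for measurability of this particular component, whereas the paper's route is softer and shorter given the reference, and makes visible why $\otimes$ is the natural domain. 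On naturality, you treat a general morphism $(f,g)$ in one square using the identity $(f \times g)\circ \Gamma_x = \Gamma_{f(x)} \circ g$, while the paper splits into naturality in each argument separately ($g = id$, then $f = id$); these are equivalent, and your version is slightly more economical. Both of your steps are sound: the $\lambda$-system verification (complements via $Q((\zeta_x)^c) = 1 - Q(\zeta_x)$, countable disjoint unions via countable additivity) and the final reduction to generators $ev_\zeta$ of $\sa_{\G(X \times Y)}$ are exactly what is needed.
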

 \begin{proof}
To verify $\tau''$  is measurable consider the diagram
\begin{equation}   \nonumber
 \begin{tikzpicture}[baseline=(current bounding box.center)]
         \node       (GX)    at      (0,0)          {$X$};
 	\node	(Y)	at	(6,0)     {$\G(Y)$};
	\node        (GXY) at   (3,0)   {$X \otimes \G(Y)$};
	\node         (GXX) at  (3,-1.8)  {$\G(X \times Y)$};
	\node         (I)       at    (3,-4)  {$[0,1]$};

	\draw[->,above] (GX) to node {$\Gamma_Q$} (GXY);
	\draw[->,above] (Y) to node {$\Gamma_x$} (GXY);
	\draw[->,right] (GXY) to node {$\tau''$} (GXX);
	\draw[->,right] (GXX) to node [yshift=3pt]{$ev_{\zeta}$} (I);
	\draw[->,left] (GX) to node {} (I);
	\draw[->,right] (Y) to node {$ev_{\Gamma_x^{-1}(\zeta)}$} (I);
	
        \node       (GX2)    at      (11,0)          {$Q$};
	\node        (GXY2) at   (9,0)   {$(x,Q)$};
	\node         (GXX2) at  (9,-1.8)  {$Q\Gamma_x^{-1}$};
	\node         (I2)       at    (9,-4)  {$Q\Gamma_x^{-1}(\zeta)$};
	
	\draw[->,above] (GX2) to node {$\Gamma_x$} (GXY2);
	\draw[->,left] (GXY2) to node {$\tau''_{X,Y}$} (GXX2);
	\draw[->,left] (GXX2) to node [yshift=3pt]{$ev_{\zeta}$} (I2);
	\draw[->,left] (GX2) to node {} (I2);

 \end{tikzpicture}.
 \end{equation}
For a fixed $Q \in \G(Y)$, the composite mapping $x \mapsto Q\Gamma_x^{-1}(\zeta)$ is well known to be \\
 \mbox{measurable\cite[Proposition 5.1.2, p156]{Cohn}}. On the other hand, for a fixed $x \in X$, the composite map on the right hand triangle in the above diagram is precisely the evaluation map $ev_{\Gamma_x^{-1}(\zeta)}$, and these maps generate the $\sigma$-algebra on $\G(Y)$.   Thus, for every $\zeta \in \sa_{X \times Y}$, 
\be \nonumber
\Gamma_x^{-1}({\tau''_{X,Y}}^{-1}(ev_{\zeta}^{-1}(U))) \in \G(Y) \quad \forall U \in \sa_{I}
\ee
Since these are all measurable in $\G(Y)$, and $X \otimes \G(Y)$ has the largest $\sigma$-algebra such that all the constant graph maps are measurable it follows the argument of $\Gamma_x^{-1}$ in the above equation is measurable.  Thus ${\tau''_{X,Y}}^{-1}(ev_{\zeta}^{-1}(U)) \in \sa_{X \otimes \G(Y)}$.
Now, using the fact $\G(X \times Y)$ is generated by the evaluation maps $ev_{\zeta}$, the measurability of $\tau''_{X,Y}$  follows.

\vspace{.1in}

To prove naturality in the first argument note that for $X \stackrel{f}{\longrightarrow} X'$ we obtain the $\M$ arrow $\G(X) \stackrel{\G(f)}{\longrightarrow} \G(X)'$ mapping  $P \mapsto Pf^{-1}$.  This gives  the commutative square
 \begin{equation} \nonumber
 \begin{tikzpicture}[baseline=(current bounding box.center)]
 	\node	(XpTY)	at	(0,2)		         {$X' \otimes \G(Y)$};
	\node	(TXpY)	at	(4.5,2)	                  {$\G(X' \otimes Y)$};
	\node	(XTY)	at	(0,0)	                  {$X \otimes \G(Y)$};
	\node        (TXY)        at       (4.5,0)                  {$\G(X \otimes Y)$};
	
	\node	(GXpY)	at	(9,2)	                  {$\G(X' \times Y)$};
	\node        (GXY)        at       (9,0)                  {$\G(X \times Y)$};

	\draw[->, right,auto] (XpTY) to node  {$\tau^{''}_{X',Y}$} (TXpY);
	\draw[->, above, auto] (XTY) to node {$f \otimes 1_{\G(Y)}$} (XpTY);
	\draw[->,below]  (XTY)  to node {$\tau^{''}_{X,Y}$}  (TXY);
	\draw[->,right] (TXY) to node {$\G(f \otimes 1_Y)$} (TXpY);
	
	\draw[->,below] (TXY) to node {$\G(id_{X \times Y})$} (GXY);
	\draw[->,above] (TXpY) to node {$\G(id_{X' \times Y})$} (GXpY);
	\draw[->,right] (GXY) to node {$\G(f \times 1_Y)$} (GXpY);
 \end{tikzpicture}
 \end{equation}
 which maps elements according to
  \begin{equation} \nonumber
 \begin{tikzpicture}[baseline=(current bounding box.center)]
 	\node	(XpTY)	at	(0,2)		         {$(f(x), Q)$};
	\node	(TXpY)	at	(5,2)	                  {$Q \Gamma_{f(x)}^{-1}=Q \Gamma_x^{-1} (f \otimes 1_Y)^{-1} $};
	\node	(XTY)	at	(0,0)	                  {$(x,Q)$};
	\node        (TXY)        at       (5,0)                  {$Q \Gamma_x^{-1}$};
	\draw[|->, right,auto] (XpTY) to node  {} (TXpY);
	\draw[|->, above, auto] (XTY) to node {} (XpTY);
	\draw[|->,right,auto]  (XTY)  to node {}  (TXY);
	\draw[|->,above,auto] (TXY) to node {} (TXpY);

 \end{tikzpicture}
 \end{equation}
where equality in the upper right hand corner follows from the observation that 
\begin{equation}   \nonumber
 \begin{tikzpicture}[baseline=(current bounding box.center)]
          \node       (XYp)    at      (0,2)          {$X \times Y$};
 	\node	(XY2p)	at	(3,2)     {$X' \times Y$};

         \node       (XY)    at      (0,0)          {$X \otimes Y$};
 	\node	(XY2)	at	(3,0)     {$X' \otimes Y$};
	\node       (Y)      at      (0,-2)          {$Y$};

        \draw[->,above] (XYp) to node {$f \times 1_Y$} (XY2p);
        \draw[->,left] (XY) to node {$id_{X \times Y}$} (XYp);
        \draw[->,right] (XY2) to node {$id_{X' \times Y}$} (XY2p);
	\draw[->,above] (XY) to node {$f \otimes 1_Y$} (XY2);
	\draw[->,left] (Y) to node {$\Gamma_x$} (XY);
	\draw[->,right] (Y) to node [xshift=5pt]{$\Gamma_{f(x)}$} (XY2);

 \end{tikzpicture}.
 \end{equation}

\vspace{.1in}
To prove naturality in the second argument let $Y \stackrel{g}{\longrightarrow} Y'$. (As the restriction to the subspace $\sigma$-algebra, $\sa_{X \times Y} \subseteq \sa_{X \otimes Y}$, should now be evident, it is not included in the following diagrams.)  
This gives us the commutative square
 \begin{equation} \nonumber
 \begin{tikzpicture}[baseline=(current bounding box.center)]
 	\node	(XTYp)	at	(0,2)		         {$X \otimes \G(Y')$};
	\node	(TXYp)	at	(6,2)	                  {$\G(X \otimes Y')$};
	\node	(XTY)	at	(0,0)	                  {$X \otimes \G(Y)$};
	\node        (TXY)        at       (6,0)                  {$\G(X \otimes Y)$};
	
	\draw[->, right,auto] (XTYp) to node  {$\tau''_{X,Y'}$} (TXYp);
	\draw[->, above, auto] (XTY) to node {$1_X \otimes \G(g)$} (XTYp);
	\draw[->,right,auto]  (XTY)  to node {$\tau''_{X,Y}$}  (TXY);
	\draw[->,right] (TXY) to node {$\G(1_X \otimes g)$} (TXYp);
 \end{tikzpicture}
 \end{equation}
 which maps elements according to
  \begin{equation} \nonumber
 \begin{tikzpicture}[baseline=(current bounding box.center)]
 	\node	(XpTY)	at	(0,2)		         {$(x, Qg^{-1})$};
	\node	(TXYp)	at	(6,2)	                  {$Qg^{-1} \tilde{\Gamma}_{x}^{-1}=Q \Gamma_{x}^{-1} (1 \otimes g)^{-1} $};
	\node	(XTY)	at	(0,0)	                  {$(x,Q)$};
	\node        (TXY)        at       (6,0)                  {$Q \Gamma_x^{-1}$};
	\draw[|->, right,auto] (XTYp) to node  { } (TXYp);
	\draw[|->, above, auto] (XTY) to node { } (XTYp);
	\draw[|->,right,auto]  (XTY)  to node { }  (TXY);
	\draw[|->,above,auto] (TXY) to node { } (TXYp);

 \end{tikzpicture}
 \end{equation}
where equality in the upper right hand corner follows from the observation that

  \begin{equation} \nonumber
 \begin{tikzpicture}[baseline=(current bounding box.center)]
 	\node	(Y)	at	(0,0)		         {$Y$};
	\node	(Yp)	at	(0,2)	         {$Y'$};
	\node	(XYp)	at	(2,2)	                  {$X \otimes Y'$};
	\node        (XY)        at       (2,0)                  {$X \otimes Y$};
	\draw[->, above] (Yp) to node  {$\tilde{\Gamma}_x$} (XYp);
	\draw[->, left] (Y) to node {$g$} (Yp);
	\draw[->,below]  (Y)  to node {$\Gamma_x$}  (XY);
	\draw[->,right] (XY) to node {$1_X \otimes g$} (XYp);

 \end{tikzpicture}
 \end{equation}

\end{proof}

\vspace{.1in}

Using the fact that the monoidal structure is symmetric we also have a natural transformation defined on components by
\begin{equation}   \nonumber
 \begin{tikzpicture}[baseline=(current bounding box.center)]
         \node       (XY)    at      (0,0)          {$\G(X) \otimes Y$};
 	\node	(XY2)	at	(5,0)     {$\G(X \times Y)$};
	\node       (y)      at      (0,-.8)          {$(P,y)$};
 	\node	(xy)	at	(5,-.8)       {$P\Gamma_y^{-1}$};

	\draw[->,above] (XY) to node {$\tau'_{X,Y}$} (XY2);
	\draw[|->] (y) to node {} (xy);

 \end{tikzpicture}.
 \end{equation}
The superscript on $\tau$ is used to denote which coordinate the probability measures are given.

The natural transformation $st$
\begin{equation}   \nonumber
 \begin{tikzpicture}[baseline=(current bounding box.center)]
         \node       (XY)    at      (0,0)          {$X^Y$};
 	\node	(XY2)	at	(5,0)     {$\G(X)^{\G(Y)}$};
	\node       (y)      at      (0,-.8)          {$g$};
 	\node	(xy)	at	(5,-.8)       {$\G(g)$};

	\draw[->,above] (XY) to node {$st_{X,Y}$} (XY2);
	\draw[|->] (y) to node {} (xy);

 \end{tikzpicture}
 \end{equation}
can now be constructed using the natural transformation $\tau''$ via
\begin{equation}   \nonumber
 \begin{tikzpicture}[baseline=(current bounding box.center)]
         \node       (XY)    at      (0,0)          {$Y^X$};
 	\node	(XY2)	at	(5,0)     {$\G(Y)^{\G(X)}$};
	\node       (LL)      at      (0,-2)          {$(Y^X \otimes \G(X))^{\G(X)}$};
 	\node	(LR)	at	(5,-2)       {$\G(Y^X \otimes X)^{\G(X)}$};

	\draw[->,above] (XY) to node {$st_{X,Y}$} (XY2);
	\draw[->,left] (XY) to node {$\Gamma_{Y^X}$} (LL);
	\draw[->,below] (LL) to node {${\tau''_{Y^X,X}}^{\T(X)}$} (LR);
	\draw[->,right] (LR) to node {$\G(ev_{Y^X,X})^{\G(X)}$} (XY2);
	
         \node       (XY2)    at      (8.5,0)          {$f$};
 	\node	(XY3)	at	(12,0)     {$\G(f)=\G(ev_{Y^X,X}) \circ \tau''_{Y^X,X} \circ \Gamma_f$};
	\node       (LL2)      at      (8.5,-2)          {$\Gamma_f$};
 	\node	(LR2)	at	(12,-2)       {$\tau''_{Y^X,X} \circ \Gamma_f$};

	\draw[|->,above] (XY2) to node {} (XY3);
	\draw[|->,left] (XY2) to node {} (LL2);
	\draw[|->,below] (LL2) to node {} (LR2);
	\draw[|->,right] (LR2) to node {} (XY3);

 \end{tikzpicture}
 \end{equation}
where $\Gamma_{Y^X}$ is the unit of the adjunction $\bullet \otimes \G(X) \vdash \bullet^{\G(X)}$, 
\begin{equation}   \nonumber
 \begin{tikzpicture}[baseline=(current bounding box.center)]
         \node       (Y)    at      (0,0)          {$Y^X$};
 	\node	(XY)	at	(5,0)		         {$(Y^X \otimes \G(X))^{\G(X)}$};
	         \node       (y)    at      (0,-.8)          {$f$};
 	\node	(xy)	at	(5,-.8)		         {$\Gamma_f$};

	\draw[->,above] (Y) to node {$\ulcorner \Gamma_Y \urcorner$} (XY);
	\draw[|->] (y) to node {} (xy);

 \end{tikzpicture}
 \end{equation}
with $\Gamma_{f}$ is the constant graph function with value $f$.  The equality 
\be \nonumber
\G(f) = \G(ev_{X,Y}) \circ \tau''_{Y^X,X} \circ \Gamma_{f}
\ee
then follows from 
\be \nonumber
\begin{array}{lcl}
\underbrace{\G(f)(\ulcorner P \urcorner)}_{=Pf^{-1}} &=& \Bigg(\G(ev_{X,Y}) \circ \tau''_{Y^X,X} \circ \Gamma_{f}\Bigg)(\ulcorner P \urcorner) \\
&=&  \G(ev_{Y^X,X}) (\tau''_{Y^X,X} (f,\ulcorner P \urcorner))  \\
&=& \G(ev_{Y^X,X}) (P\tilde{\Gamma}_f^{-1}) \quad \textrm{ note }\Gamma_f \ne \tilde{\Gamma}_f\\
&=&P\tilde{\Gamma}_f^{-1} ev_{Y^X,X}^{-1} \\
&=& Pf^{-1} 
\end{array}
\ee
where the last line follows from the fact the constant graph map $\tilde{\Gamma}_f$ satisfies the equation expressed by the commutativity of the diagram 
\begin{equation}   \nonumber
 \begin{tikzpicture}[baseline=(current bounding box.center)]
         \node       (X)    at      (0,0)          {$X$};
 	\node	(XY)	at	(3,0)		         {$Y^X \otimes X$};
	         \node       (Y)    at      (3,-2)          {$Y$};
 
	\draw[->,above] (X) to node {$\tilde{\Gamma}_f$} (XY);
	\draw[->,right] (XY) to node {$ev_{X,Y}$} (Y);
	\draw[->,left] (X) to node [xshift=-5pt,yshift=-3pt]{$f$} (Y);

 \end{tikzpicture}
 \end{equation}

\section{The construction of Bayesian inference maps}  
The construction of Bayesian inference maps using the symmetric monoidal closed category structure of $\M$ applies to both the deterministic and nondeterministic Bayesian models.  For simplicity  we first present the deterministic case  and subsequently show the minor changes required to address the nondeterministic Bayesian model.

However, it is worth noting that in constructing Bayesian inference maps it  suffices (for standard spaces) to  consider the special case where the given conditional probability $P_{Y|X}$ is a deterministic probability.   The general problem, as given in diagram \ref{BayesDiagram}, can then be solved using the same approach on the Bayesian model $(P_{X \times Y}, \delta_{\pi_Y})$, expressed by the  $K(\G)$-diagram   

  \begin{equation}  
 \begin{tikzpicture}[baseline=(current bounding box.center)]
 	\node	(1)	at	(0,0)		         {$1$};
	\node	(Y)	at	(3,-4)	               {$Y$};
	\node	(XY)	at	(-3,-4)	               {$X \times Y$};
	\node        (XYY)  at      (0,-2)               {$X \times Y \times Y$};
	
	\draw[->,out=0,in=90,right] (1) to node  {$P_Y =P_{X \times Y}\pi_Y^{-1}$} (Y);
	\draw[->,out=-180,in=90,left] (1) to node  {$P_{X \times Y}$} (XY);
	\draw[->,left] (1) to node {$P_{X \times Y}\Gamma_{\pi_Y}^{-1}$} (XYY);
	\draw[->,above] (XYY) to node {$\delta_{\pi_{Y}}$} (Y);
	\draw[->,above] (XY) to node [xshift=-2pt] {$\delta_{\Gamma_{\pi_{Y}}}$} (XYY);
	
	\draw[->, above] ([yshift=2pt]XY.east) to node {${\delta_{\pi_Y}}$} ([yshift=2pt]Y.west);
	\draw[->, below,dashed] ([yshift=-2pt]Y.west) to node {$P_{X \times Y|Y}$} ([yshift=-2pt]XY.east);

 \end{tikzpicture}
 \end{equation}
to construct the conditional probability $P_{X \times Y|Y}$, which in turn specifies $P_{X|Y}$.

\subsection{Inference maps for deterministic  models}  
Our generic deterministic Bayesian problem (base model)  is represented by the diagram
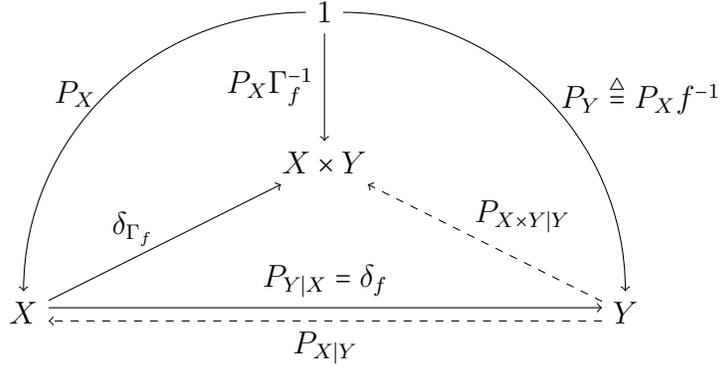
\begin{figure}[H]
  \begin{equation}   \nonumber
 \begin{tikzpicture}[baseline=(current bounding box.center)]
 	\node	(1)	at	(0,0)		         {$1$};
	\node	(Xf)	at	(4,-4)	               {$Y$};
	\node	(X)	at	(-4,-4)	               {$X$};
	\node        (XXf)  at      (0,-2)               {$X \times Y$};
	
	\draw[->,out=0,in=90,right] (1) to node  [xshift=3pt]{$P_Y \stackrel{\triangle}{=} P_Xf^{-1}$} (Xf);
	\draw[->,out=180,in=90,left] (1) to node  {$P_X$} (X);
	\draw[->,left] (1) to node {$P_X\Gamma_{f}^{-1}$} (XXf);
	\draw[->,above,right,dashed] (Xf) to node [xshift=-8pt,yshift=10pt]{$P_{X \times Y | Y}$} (XXf);
	\draw[->,above,left] (X) to node  [xshift=0pt,yshift=5pt]{$\delta_{\Gamma_{f}}$} (XXf);
	\draw[->,above] ([yshift=2pt] X.east) to node {$P_{Y|X}=\delta_{f}$} ([yshift=2pt] Xf.west);
	\draw[->, below,dashed] ([yshift=-3pt] Xf.west) to node {$P_{X|Y}$} ([yshift=-3pt] X.east);

 \end{tikzpicture}
 \end{equation}
\caption{The generic deterministic Bayesian model.}
\label{baseModelDiagram}
 \end{figure}
 \noindent
where the dashed arrows are the inference maps to be determined.  

The inference maps can be constructed by  taking the pullback of the $\M$-diagram\footnote{The notation $\ulcorner P_X \Gamma_f^{-1} \urcorner$ is used to emphasize that the diagram is in $\M$, rather than $K(\G)$ where probabilities are actual arrows.}
\begin{equation}   \nonumber
 \begin{tikzpicture}[baseline=(current bounding box.center)]
         \node       (R)    at      (0,-2)          {$Lim$};
 	\node	(1)	at	(0,-10)        {$1$};
	\node        (GXY) at     (10,-10)           {$\G(X \times Y)$};
	\node       (GXYY)  at    (10,-2)         {$\G(X \times Y)^Y$};
	\node        (Gst)    at    (10,-4.5)        {$\G^2(X \times Y)^{G(Y)}$};
	\node        (G)     at      (10,-7.5)         {$\G(X \times Y)^{G(Y)}$};

	\draw[->,below] (1) to node {$\ulcorner P_X\Gamma_f^{-1} \urcorner$} (GXY);
	
	\draw[->,above,dashed] (R) to node {$m$} (GXYY);
	\draw[->,left,dashed] (R) to node {$!$} (1);
	\draw[->,right] (GXY) to node {$ $} (GXY);
	\draw[->,right] (GXYY) to node {$st_{Y,\G(X \times Y)}$}(Gst);
	\draw[->,right] (Gst) to node {$\mu_{X \times Y}^{\G(Y)}$} (G);
	\draw[->,right] (G) to node {$ev_{P_Y}=\G(X \times Y)^{P_Y}$}(GXY);
	\draw[->,dashed,above] (1) to node [xshift=-8pt]{$P_{X \times Y|Y}$} (GXYY);

 \end{tikzpicture}
 \end{equation}
\noindent
where the map $ev_{P_Y}$ is evaluation at $P_Y$, i.e, given any map $\ulcorner T \urcorner \in \G(X \times Y)^{\G(Y)}$ precompose it with $\ulcorner P_Y \urcorner$.
The map $m$ in this pullback diagram is the subset of all $P_{X \times Y|Y} \in \G(X \times Y)^Y$, which upon application of the vertical arrows on the right hand side of the diagram, then yield the composite 

\begin{equation}   \nonumber
 \begin{tikzpicture}[baseline=(current bounding box.center)]
         \node       (1)    at      (0,0)          {$1$};
 	\node	(XY)	at	(4,0)		         {$\G(Y)$};
	 \node       (y)    at      (4,-2)          {$\G^2(X \times Y)$};
 	\node	(xy)	at	(0,-2)		         {$\G(X \times Y)$};

	\draw[->,above] (1) to node {$\ulcorner P_Y \urcorner$} (XY);
	\draw[->,right] (XY) to node {$\G(P_{X \times Y|Y})$} (y);
	\draw[->,below] (y) to node {$\mu_{X \times Y}$} (xy);
	\draw[->,left] (1) to node {$\ulcorner P_X\Gamma_f^{-1} \urcorner$} (xy);

 \end{tikzpicture}
 \end{equation}
which coincides with $P_X \Gamma_f^{-1}$ since the diagram is a pullback.  The commutativity of this diagram is Bayes equation for the deterministic model.  The fact the diagram is a pullback is trivial as $\M$ has all limits and the point $\ulcorner P_X \Gamma_f^{-1} \urcorner$ is monic so the limit corresponds to a subset of $\G(X \times Y)^Y$.
  
 The limit, $Lim$, in the pullback can be constructed, and characterized, using  the tensor product probability $P_X \otimes P_Y$, or alternatively, it can be constructed ``pointwise'' using the family of tensor probabilities $P_X \otimes \delta_y = P_X \Gamma_y^{-1}$.\footnote{In the standard theoretical approach to computing inference maps the Radon Nikodym (RN) derivatives are computed with respect to measures on the  space $Y$ rather than the product space $X \times Y$.  Using RN derivatives, either directly or indirectly, on the product space is  preferable as subsequent arguments illustrate. (One need not ``glue together'' a family of RN derivatives.)}
 
 \textbf{Method 1: Using Radon Nikodym derivatives.} Note that $P_X \Gamma_f^{-1} \ll P_X \otimes P_Y$. On the basic rectangles, $A \times B \in \sa_{X \times Y}$ we have $P_X\Gamma_f^{-1}(A \times B) = P_X(A \cap f^{-1}(B)) \le \min(P_X(A),P_Y(B))$.\footnote{The probability $P_X\Gamma_f^{-1}$ is the push forward of the tensor product probability  $P_X \otimes P_Y$ by the idempotent map $\Gamma_f \circ \pi_X$,  $(P_X \otimes P_Y)(\Gamma_f \circ \pi_X)^{-1} = P_X\Gamma_f^{-1}$.}  Let $h$ be a Radon Nikodym derivative for this absolute continuity condition,
\be \label{RN}
\begin{array}{lcl}
P_X\Gamma_f^{-1}(\zeta) &=& \int_{\zeta} h \, d(P_X \otimes P_Y) \\
&=& \int_Y \int_{\zeta} h \, d(\underbrace{P_X\Gamma_y^{-1}}_{=P_X \otimes \delta_y}) \, dP_Y \\
&=& \int_Y (\int_{\Gamma_y^{-1}(\zeta)} h(\cdot,y) \, dP_X) dP_Y
\end{array}
\ee
As $P_X\Gamma_f^{-1}(X \times Y)=1$ it follows that $\int_X h(\cdot, y) \, dP_X=1$ $P_Y-a.e.$.  Suppose $V \in \sa_Y$ such that $\int_X h(\cdot, y) \, dP_X=1$ for all $y \in V$.

Then 
\be \nonumber
P_{X \times Y|Y}(\zeta | y)  \stackrel{\triangle}{=} \left\{ \begin{array}{ll} \int_{\zeta} h(\cdot, \cdot) \, d(P_X \Gamma_y^{-1}) & y \in V\\ 
P_X\Gamma_f^{-1} & y \not \in V \end{array} \right.
\ee
defines a conditional probability, the inference map for the deterministic model,  which  
is absolutely continuous with respect to $P_X \Gamma_y^{-1}$, and by equation (\ref{RN}), the weighted sum
\be \label{integrate}
P_X \Gamma_f^{-1} = \int_Y P_{X \times Y}(\cdot |y) \, dP_Y.
\ee   
The sequence of vertical arrows in the pullback diagram show that this expression make sense \emph{without requiring evaluation} on any measurable set in $X \times Y$, as the multiplication natural transformation $\mu_{X \times Y}$ and $ev_{P_Y}$ allows us to ``integrate'' without evaluation on any measurable set.\footnote{This statement requires the SMCC structure where the evaluation maps 
\be \nonumber
\G(X \times Y)^{\G(Y)} \otimes \G(Y) \stackrel{ev}{\longrightarrow} \G(X \times Y)
\ee
are measurable.  Formally, equation (\ref{integrate}) is the messy expression
\be \nonumber
(ev_{P_X} \circ \mu_{X \times Y}^{\G(Y)} \circ st_{Y,\G(X \times Y)} \circ \eta_{\G(X \times Y)^Y})(P_{X \times Y|Y}) = P_X\Gamma_f^{-1}.
\ee 
We feel justified in using the integral sign, because viewing probability measures as weakly averaging affine functionals which preserve limits,  integrals $\int$ correspond precisely to evaluation maps.\cite{Sturtz} }

\vspace{.1in}

\textbf{Method  2: Using the constant graph maps.}
 For each point $y \in Y$,  consider the $K(\G)$-diagram
 \begin{equation}   \nonumber
 \begin{tikzpicture}[baseline=(current bounding box.center)]
 	\node	(1)	at	(0,0)		         {$1$};
	\node	(Y)	at	(3,-4)	               {$\{y\} \cong 1$};
	\node	(X)	at	(-3,-4)	               {$X$};
	\node        (XY)  at      (0,-2)               {$X \times Y$};
	
	\draw[->,out=180,in=90,left] (1) to node  {$P_X$} (X);
	\draw[->,left] ([xshift=-3pt]1.south) to node {$P_X \Gamma_f^{-1}$} ([xshift=-3pt]XY.north);
	\draw[->,right] ([xshift=3pt]1.south) to node {$P_X\Gamma_y^{-1}$} ([xshift=3pt]XY.north);
	\draw[->,above,dashed] (Y) to node [xshift=40pt]{$P_{X \times Y|Y}(\cdot | y) = P_{a}^y$} (XY);
	
	\draw[->,above] ([yshift=2pt] X.east) to node [xshift=-5pt]{$\delta_{\Gamma_f}$} ([yshift=2pt] XY.west);
	\draw[->, below] ([yshift=-3pt] X.east) to node [xshift=5pt]{$\delta_{\Gamma_y}$} ([yshift=-3pt] XY.west);

 \end{tikzpicture}
 \end{equation}
Provided that the two measures $P_X\Gamma_y^{-1}, P_X\Gamma_f^{-1} \in \G(X \times Y)$ are not singular with respect to each other,  \mbox{$P_X \Gamma_y^{-1} \not \perp P_X \Gamma_f^{-1}$},  Lebesques Decomposition theorem allows us to uniquely write $P_X \Gamma_f^{-1}$ as a convex sum of two probability measures, 
\be \nonumber
P_X \Gamma_f^{-1} = \alpha P_{a}^y + (1-\alpha) P_{s}^y.
\ee

If we define\footnote{It would be more elegant to define
\be \nonumber
P_{X \times Y|Y}( \cdot | y) = \left\{ \begin{array}{ll} P_a^{y}   & \textrm{ whenever }P_X \Gamma_f^{-1} \not \perp P_X \Gamma_y^{-1} \\
P_X \Gamma_f^{-1} & \textrm{otherwise}
\end{array} \right.
\ee    However, we do not know how to prove the set
$\{y \in Y \, | \, P_X \Gamma_f^{-1}  \perp P_X \Gamma_y^{-1} \}$
is measurable.}

\be \nonumber
P_{X \times Y|Y}( \cdot | y) = \left\{ \begin{array}{ll} P_a^{y}   & y \in V \\
P_X \Gamma_f^{-1} & y \not \in V
\end{array} \right.
\ee
then the two characterizations are equivalent.
The limit of the pullback is the subset  $Lim \subset \G(X \times Y)^Y$ consisting of all possible inference maps.  However, these inference maps   are $P_Y-a.e.$ equal, which is just the statement in equation (\ref{integrate}).  The following results are now obvious.

\begin{lemma} The function 
\begin{equation}   \nonumber
 \begin{tikzpicture}[baseline=(current bounding box.center)]
         \node       (Y)    at      (0,0)          {$Y$};
 	\node	(XY)	at	(8,0)		         {$\G(X\times Y)$};
	 \node       (y)    at      (0,-.8)          {$y$};
 	\node	(xy)	at	(8,-.8)		         {$P_{X \times Y|Y}(\bullet | y)=\int_{\bullet} h \, d(P_X \Gamma_y^{-1})$};

	\draw[->,above] (Y) to node {$P_{X\times Y |Y}$} (XY);
	\draw[|->] (y) to node {} (xy);

 \end{tikzpicture}
 \end{equation}
 is measurable.
 \end{lemma}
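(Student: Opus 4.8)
The plan is to use the defining property of the $\sigma$-algebra on $\G(X \times Y)$: it is generated by the evaluation maps $ev_\zeta : Q \mapsto Q(\zeta)$, one for each $\zeta \in \sa_{X \times Y}$. Hence a function $g : Y \rightarrow \G(X \times Y)$ is measurable if and only if each scalar composite $ev_\zeta \circ g : Y \rightarrow [0,1]$ is measurable. So I would fix an arbitrary $\zeta \in \sa_{X \times Y}$ and reduce the lemma to showing that the real-valued function
\be \nonumber
y \longmapsto P_{X \times Y | Y}(\zeta \mid y)
\ee
is measurable, then let $\zeta$ range over the generators.

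By the definition of the inference map this function splits along the set $V$. Off $V$ it is the constant $P_X \Gamma_f^{-1}(\zeta)$, which is trivially measurable. On $V$, unwinding $\Gamma_y$ and using $\Gamma_y^{-1}(\zeta) = \{\, x \in X \mid (x,y) \in \zeta \,\}$, the value is
\be \nonumber
P_{X \times Y|Y}(\zeta \mid y) = \int_{\Gamma_y^{-1}(\zeta)} h(\cdot, y)\, dP_X = \int_X \mathbb{1}_\zeta(x,y)\, h(x,y)\, dP_X(x),
\ee
i.e. the $P_X$-partial integral of the $y$-section of the nonnegative $\sa_{X \times Y}$-measurable function $\mathbb{1}_\zeta \cdot h$. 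I would then invoke Tonelli's theorem (the same circle of facts as \cite[Proposition 5.1.2]{Cohn} used above in the construction of $\tau''$): for a nonnegative product-measurable integrand the partial integral against the finite measure $P_X$ is a measurable function of the remaining variable $y$. The full map is then $\mathbb{1}_V(y)\int_X \mathbb{1}_\zeta\, h\, dP_X + \mathbb{1}_{V^c}(y)\, P_X\Gamma_f^{-1}(\zeta)$, a sum of measurable functions and hence measurable; as $\zeta$ was arbitrary the lemma follows.

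Two bookkeeping points deserve attention rather than presenting genuine difficulty. First, one must check that $V$ itself is measurable: it is the preimage of $\{1\}$ under $y \mapsto \int_X h(\cdot,y)\,dP_X$, which is measurable by exactly the Tonelli argument just used (the case $\zeta = X \times Y$). Second, one must confirm that the integrand lives in the product $\sigma$-algebra $\sa_{X \times Y}$, not the finer tensor $\sigma$-algebra, since Tonelli requires the ordinary product; this holds because $h$ is a Radon Nikodym derivative of measures on $X \times Y$ and $\mathbb{1}_\zeta$ is a product-measurable indicator. I expect the \emph{only} substantive step to be the classical measurability of partial integrals; everything else is the reduction via the generating evaluation maps on $\G(X \times Y)$ together with the two-case definition of the inference map, which is why the statement is asserted to be obvious.
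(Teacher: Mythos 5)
Your proof is correct and is essentially the argument the paper intends: the paper states this lemma without proof (declaring it ``obvious''), and the implicit justification rests on exactly the two facts you invoke --- that the $\sigma$-algebra on $\G(X\times Y)$ is generated by the evaluation maps $ev_\zeta$, so measurability can be checked one $\zeta$ at a time (the same reduction used in the paper's proof that $\tau''$ is measurable), and that partial integrals of nonnegative product-measurable functions against $P_X$ are measurable in $y$ (the Cohn/Tonelli fact the paper cites earlier). Your two bookkeeping points --- measurability of $V$ as the preimage of $\{1\}$, and the fact that $h$ lives on the product $\sigma$-algebra $\sa_{X\times Y}$ rather than the tensor one --- are details the paper leaves entirely implicit, and supplying them strengthens rather than changes the argument.
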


\begin{thm} The conditional probability $P_{X \times Y|Y}$ satisfies the condition 
\be \nonumber
P_{X \times Y|X} \star P_X = P_X\Gamma_f^{-1} = P_{X \times Y|Y} \star P_Y
\ee
making $P_{X \times Y|Y}$ and  $P_{X|Y}= \delta_{\pi_X} \star P_{X \times Y|Y}$ inference maps for the deterministic  Bayesian model  $(P_X, \delta_f)$.
\end{thm}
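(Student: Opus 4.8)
The plan is to verify the two equalities constituting Bayes equation separately, in each case by unwinding the Kleisli composition $\star$ into an ordinary integral and then appealing to computations already performed during the construction. Recall that for a point $P \colon 1 \to \G(Z)$ and a kernel $K \colon Z \to \G(W)$ the composite $K \star P$ is the probability measure $\zeta \mapsto \int_Z K(\zeta \mid z)\, dP$. Thus each equality is an equality of probability measures on $X \times Y$, and it suffices to check it on an arbitrary $\zeta \in \sa_{X \times Y}$.

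For the left equality I would substitute $P_{X \times Y|X} = \delta_{\Gamma_f}$ and compute, for each $\zeta \in \sa_{X \times Y}$,
\be \nonumber
(\delta_{\Gamma_f} \star P_X)(\zeta) = \int_X \delta_{\Gamma_f}(\zeta \mid x)\, dP_X = P_X(\{x \in X : (x,f(x)) \in \zeta\}) = P_X(\Gamma_f^{-1}(\zeta)),
\ee
using only that $\delta_{\Gamma_f}(\zeta \mid x) = 1$ precisely when $(x,f(x)) \in \zeta$, i.e.\ when $x \in \Gamma_f^{-1}(\zeta)$. This establishes $P_{X \times Y|X} \star P_X = P_X \Gamma_f^{-1}$ directly from the definition of the deterministic measure, requiring no further hypotheses.

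For the right equality I would unwind the composite as $(P_{X \times Y|Y} \star P_Y)(\zeta) = \int_Y P_{X \times Y|Y}(\zeta \mid y)\, dP_Y$, where the preceding Lemma guarantees that the integrand is measurable so the integral is well defined. Splitting the integral over $V$ and its complement, and using that $P_Y(Y \setminus V) = 0$ — which is exactly the content of $\int_X h(\cdot,y)\, dP_X = 1$ holding $P_Y$-a.e., the property defining $V$ — the contribution off $V$ vanishes, and on $V$ one substitutes the definition $P_{X \times Y|Y}(\zeta \mid y) = \int_\zeta h\, d(P_X\Gamma_y^{-1})$. The resulting double integral is precisely the right-hand side of equation (\ref{RN}), so $P_{X \times Y|Y} \star P_Y = P_X\Gamma_f^{-1}$.

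The two displayed equalities together are Bayes equation, so $P_{X \times Y|Y}$ is an inference map by definition; and $P_{X|Y} = \delta_{\pi_X} \star P_{X \times Y|Y}$ is then an inference map by the inverse relationship between $P_{X|Y}$ and $P_{X \times Y|Y}$ recorded in the introduction. There is no deep obstacle here: the analytic substance, namely the existence of the Radon--Nikodym derivative $h$ and the disintegration (\ref{RN}), has already been carried out in the construction. The only point requiring care is the harmless role of the null set $Y \setminus V$ — the modification of $P_{X \times Y|Y}$ there is forced only so that it is an everywhere-defined probability kernel, and because it occurs on a $P_Y$-null set it leaves the composite $P_{X \times Y|Y} \star P_Y$ unchanged. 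The remainder is bookkeeping.
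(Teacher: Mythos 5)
Your proposal is correct and takes essentially the same route as the paper: the paper's one-line proof simply cites equation~(\ref{RN}), which is precisely the disintegration you invoke to establish the right-hand equality. Your extra bookkeeping---the direct computation $(\delta_{\Gamma_f} \star P_X)(\zeta) = P_X(\Gamma_f^{-1}(\zeta))$ for the left-hand equality, and the observation that the modification of $P_{X \times Y|Y}$ off the set $V$ is harmless because $P_Y(Y \setminus V) = 0$---just makes explicit the details the paper leaves implicit.
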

\begin{proof}
This follows directly from equation (\ref{RN}) as $P_{X \times Y|Y}(\zeta | y) = \int_{\zeta} h \, d(P_X \Gamma_y^{-1})$.
\end{proof}
   
\subsection{Inference maps for nondeterministic models}
The only change required from the deterministic construction is to replace the probability measure $P_X\Gamma_f^{-1}: 1 \rightarrow \G(X \times Y)$ 
used in the pullback construction with the joint probability measure specified by the composite

\begin{equation}   \nonumber
 \begin{tikzpicture}[baseline=(current bounding box.center)]

 	\node	(1)	at	(0,0)		         {$1$};
	\node	(GX)	at	(2,0)	               {$\G(X)$};
	\node        (GXGY)  at   (6,0)          {$\G(X \otimes \G(Y))$};
	\node        (GGXY)    at   (10,0)        {$\G(\G(X \otimes Y))$};
	\node        (GXY) at     (14,0)           {$\G(X \otimes Y)$};
	\node        (F)    at     (14,-2)         {$\G(X \times Y)$};

	\draw[->,above] (1) to node {$\ulcorner P_X\urcorner$} (GX);
	\draw[->,above] (GX) to node {$\G(\Gamma_{P_{Y|X}})$} (GXGY);
	\draw[->,above] (GXGY) to node  {$\G(\tau''_{X,Y})$} (GGXY);
	\draw[->,above] (GGXY) to node {$\mu_{X \times Y}$}(GXY);
	\draw[->,left] (GXY) to node {$\G(id_{X\times Y})$} (F);

 \end{tikzpicture}
 \end{equation}

\section{Appendix} The inadequacy of kernels.

\paragraph{Example}
 Let $\mathcal{L}$ denote the Lebesque measure.    Take the identity map $[0,1] \rightarrow [0,1]$ where the domain space  has the $\sigma$-algebra of Lebesque measurable sets, and the codomain space has the Borel $\sigma$-algebra. 
  The deterministic Bayesian model $( \mathcal{L},\delta_{id})$ apparently has no corresponding inference map.  The logical choice for the inference map,  the identity map $\delta_{id_{[0,1]}}$ is not an option because for any non Borel measurable set $F$ the map $\delta_{id_{[0,1]}}(F|\cdot)$ is not measurable.
 
 \vspace{.1in}
 
 This elementary problem captures the difficulty arising in constructing inference maps.  Namely, problems can arise on sets of measure $0$.  Restricting $\M$ to  Standard Spaces or Polish Spaces circumvents the problem by not allowing the use of Lebesque measurable sets,  or any other ``unpleasant'' $\sigma$-algebra where the same type of difficulty with sets of measure $0$ arises.  
 
On the other hand, this problem can be avoided if instead of using kernels, which is what we have referred to as (regular) conditional probabilities,  we use $\mathcal{D}$-kernels.  

Suppose $f: X \rightarrow Y$, and $P_X \in \G(X)$.  If $\nu: \sa_Y \times X \rightarrow [0,1]$ then we say $\nu$ is a  $\mathcal{D}$-kernel provided 
\begin{enumerate}
 \item for each $x \in X$, $Q( \cdot | x)$ is a probability measure on $Y$, and
 \item for each $A \in \sa_X$  $Q(A |\cdot)$ is a $\sa_Y^* - \B$ measurable function, 
 where $\sa_X^{*}$ is the completion of $\sa_X$ with respect to $P_X$.
 \end{enumerate}
 The difference between kernels and $\mathcal{D}$-kernels is the kernels requires measurability with respect to $\sa_X$ rather than the completion with respect to the measure $P_X$.  
 
 Using $\mathcal{D}$-kernels the identity map $id_{[0,1]}$ is an inference map for the example.  So why not use these instead of kernels? 
 The difficulty with using $\mathcal{D}$-kernels arises from the awkward category it imposes upon us due to the fact we need to be able to compose such maps.  
 
For example, we could take the objects to be a triple $(X, \sa_X,  \mathbf{D})$ where $\mathbf{D} \subseteq \G(X)$, and  an arrow $Q$
  \begin{equation}   \nonumber
 \begin{tikzpicture}[baseline=(current bounding box.center)]
	
	\node	(X)	at	(0,0)	               {$(X, \sa_X,  \mathbf{D})$};
	\node        (Y)  at      (4,0)               {$(Y, \sa_Y,  \mathbf{E})$};

	\draw[->,above] (X) to node [xshift=0pt]{$Q$} (Y);
 \end{tikzpicture}
 \end{equation}
is a $\mathcal{D}$-kernel for every $P_X \in \mathbf{D}$, and $Q( \cdot | x) \in \mathbf{E}$ for every $x \in X$.  The set $\mathbf{E}$  plays no role whatsoever in the requirement of the arrow $Q$. But upon precomposition with any other arrow   

  \begin{equation}   \nonumber
 \begin{tikzpicture}[baseline=(current bounding box.center)]
	
	\node	(X)	at	(0,0)	               {$(X, \sa_X,  \mathbf{D})$};
	\node        (Y)  at      (4,0)               {$(Y, \sa_Y,  \mathbf{E})$};
	\node        (W)  at      (-4,0)               {$(W, \sa_W,  \mathbf{F})$};

	\draw[->,above] (X) to node [xshift=0pt]{$Q$} (Y);
	\draw[->,above] (W) to node [xshift=0pt]{$R$} (X);
 \end{tikzpicture}
 \end{equation}
 the  composition requires the integrand in
 \be \nonumber
\int_X Q(\zeta | \cdot) \, dR(\cdot | w)
\ee 
to be $R(\cdot | w)^*$-measurable.  Hence the requirement that each $R(\cdot|w) \in \mathbf{D}$ for every $w \in W$.  Changing $R$ to another arrow shows that  
for such a scheme to work in defining a category requires that with every space $(X, \sa_X)$ in $\M$, we need to make a lot of copies of it,  so we can proceed to have every possible composition we had within the $K(\G)$ framework using kernels.

\vspace{.2in}

 \begin{flushleft}
Kirk Sturtz \\
Universal Mathematics \\
\email{kirksturtz@UniversalMath.com} \\
\end{flushleft}


\begin{thebibliography}{99}
\bibitem{Cohn} D. L. Cohn, \textit{Measure Theory}. Birkhauser, 1980.

\bibitem{Culbertson} J. Culbertson and K. Sturtz. A categorical foundation for Bayesian Probability. Applied Categorical Structures, August 2014, Volume 22, Issue 4, pp 647-662.

\bibitem{Faden} A. M. Faden, The existence of regular conditional probabilities: necessary and sufficient conditions, The Annals of Probability, Vol. 13, No. 1, p288-298.


\bibitem {Giry} M. Giry, A categorical approach to probability theory, in Categorical Aspects of Topology and Analysis, Vol. 915,  68-85, Springer-Verlag, 1982. 

\bibitem {Gray} Robert M. Gray, \textit{Probability, Random Processes, and Ergodic Properties},  Springer-Verlag, 2006. \url{http://www.csee.wvu.edu/~xinl/library/books/arp.pdf}

\bibitem{Halmos} P. Halmos, \textit{Measure Theory}. Springer-Verlag, 1978.

\bibitem{Kock1} A. Kock, Strong Functors and Monoidal Monads, Archiv der Math. 23 (1972), 113-120.

\bibitem{Kock2} A. Kock, Monads on Symmetric Monoidal Closed Categories, Archiv der Mathematik XXI, Vol. XXI, 1970.

\bibitem{Sturtz} K. Sturtz,  Categorical Probability Theory, \url{http://arxiv.org/pdf/1406.6030.pdf}

\bibitem{Winter} B. B. Winter, An alternate development of conditioning, Statistica Neerlandica Vol. 33, Issue 4, 1979, 197-212. \url{ }

\end{thebibliography}
 \end{document}